\providecommand{\U}[1]{\protect\rule{.1in}{.1in}}
\newtheorem{theorem}{Theorem}[section]
\theoremstyle{plain}
\newtheorem{corollary}[theorem]{Corollary}
\newtheorem{example}[theorem]{Example}
\newtheorem{fact}[theorem]{Fact}
\newtheorem{proposition}[theorem]{Proposition}
\numberwithin{equation}{section}
\theoremstyle{definition}
\newtheorem{convention}[theorem]{Convention}
\newtheorem{definition}[theorem]{Definition}
\newtheorem{remark}[theorem]{Remark}
\newcommand {\ggs}{\gtrapprox}
\newcommand {\n}{\mathbb{N}}
\newcommand {\starn}{\,^*\mathbb{N}}
\newcommand {\ld}{\underline{d}}
\newcommand {\ud}{\overline{d}}
\newcommand {\uld}{\overline{ld}}
\newcommand {\lld}{\underline{ld}}
\newcommand {\st}{\operatorname{st}}
\def \BD{\operatorname{BD}}
\def \lBD{\operatorname{\ell BD}}
\def \r{\mathbb{R}}
\begin{document}
\baselineskip=18pt

\title[Approximate Polynomial Structure in Additively Large Sets]{Approximate Polynomial
Structure\\ in Additively Large Sets}
\author[Di Nasso et. al.]{Mauro Di Nasso, Isaac Goldbring, Renling Jin,
Steven Leth, Martino Lupini, Karl Mahlburg}
\thanks{The authors were supported in part by the American Institute of Mathematics through its SQuaREs program.  I. Goldbring was partially supported by NSF CAREER grant DMS-1349399.  M. Lupini was supported by the York University Susan Mann Dissertation Scholarship.  K. Mahlburg was supported by NSF Grant DMS-1201435.}
\address{Dipartimento di Matematica, Universita' di Pisa, Largo Bruno
Pontecorvo 5, Pisa 56127, Italy}
\email{dinasso@dm.unipi.it}
\address{Department of Mathematics, Statistics, and Computer Science,
University of Illinois at Chicago, Science and Engineering Offices M/C 249,
851 S. Morgan St., Chicago, IL, 60607-7045}
\email{isaac@math.uic.edu}
\address{Department of Mathematics, College of Charleston, Charleston, SC,
29424}
\email{JinR@cofc.edu}
\address{School of Mathematical Sciences, University of Northern Colorado,
Campus Box 122, 510 20th Street, Greeley, CO 80639}
\email{Steven.Leth@unco.edu}
\address{Fakult\"{a}t f\"{u}r Mathematik, Universit\"{a}t Wien,
Oskar-Morgenstern-Platz 1, Room 02.126, 1090 Wien, Austria.}
\email{martino.lupini@univie.ac.at}
\address{Department of Mathematics, Louisiana State University, 228 Lockett
Hall, Baton Rouge, LA 70803}
\email{mahlburg@math.lsu.edu}
\thanks{}
\date{}
\keywords{nonstandard analysis, log density}
\subjclass[2010]{}
\dedicatory{ }

\begin{abstract}
We show that any subset of the natural numbers with positive logarithmic Banach density contains a set that is within a factor of two of a
geometric progression, improving the bound on a previous result of the
authors. \ Density conditions on subsets of the natural numbers that imply the
existence of approximate powers of arithmetic progressions are developed and explored.
\end{abstract}

\maketitle

\section{Introduction\label{Section: Introduction and Preliminaries}}

In \cite{DGJLLM}, the authors introduced a measure space, obtained by
taking a quotient of a Loeb measure space, that has the property that
multiplication is measure-preserving and for which standard sets of positive logarithmic
density have positive measure. \ The log Banach density of a standard set (see
Section 2 below for the definition) was also introduced, and this measure
space framework was used, in conjunction with Furstenberg's Recurrence
Theorem, to obtain a standard result about the existence of approximate
geometric progressions in sets of positive log Banach density. \ In this paper,
we improve the bounds of approximation of this result by using Szemer\'{e}di's
Theorem together with a \textquotedblleft logarithmic change of
coordinates.\textquotedblright\ \ More specifically, in Proposition 3.1, we
show that if $A$ is a standard subset of the natural numbers, then the Banach
density of $\left\{  \left\lceil \log_{2}(x)\right\rceil :x\in A\right\}  $ is
greater than or equal to the log Banach density of $A$. \ This allows us to
use Szemer\'{e}di's Theorem to show that every set of positive Banach log
density contains a set which is \textquotedblleft within a factor of
2\textquotedblright\ of being a geometric sequence; Theorem 3.3 provides a
precise version of this statement. \ We also explore a family of densities on the natural
numbers, the (upper) $r$-Banach densities for $0<r\leq1$, which have the
property that positive $1/m$-Banach density implies the existence of
approximate $m$th powers of arithmetic progressions, in a sense made precise
in Theorem 3.7.  (This family of densities was introduced in \cite{DGJLLM}, although $BD_{m}(A)$ in that paper corresponds to $BD_{1/m}(A)$ here.) \ 

In Section 2 we establish some properties of the log Banach density and the
$r$-Banach densities, most notably that the log Banach density of a set
$A$ is always less than or equal to every $r$-Banach density of $A$, and that if
$r<s$ then the $r$-Banach density of $A$ is less than or equal to the
$s$-Banach density of $A$ (Theorem 2.12). \ These inequalities can both be strict.
\ In fact, it is easy to see that the log Banach density of a set $A$ could be 0 while every
$r$-Banach density of $A$ is 1, and in Example 2.13 we see that if $r<s$ then it is possible to
have the $r$-Banach density of a set $A$ be 0 while the $s$-Banach density of $A$ is 1.

In Section 3 we establish the aforementioned approximation results and
provide examples to show that the level of approximation is optimal.

We use nonstandard methods, which simplifies a number of the arguments.\ \ For
an introduction to nonstandard methods aimed specifically toward applications
to combinatorial number theory, see \cite{jin}.

\subsection{Acknowledgements}

This work was initiated during a week-long meeting at the American
Institute for Mathematics on August 4-8, 2014 as part of the SQuaRE (Structured
Quartet Research Ensemble) project \textquotedblleft Nonstandard Methods in
Number Theory.\textquotedblright\ The authors would like to thank the
Institute for the opportunity and for the Institute's hospitality during their stay.

\section{$r$-density and logarithmic density
\label{Section: $r$-density and logarithmic density}}

\begin{convention}
In this paper, $\n$ denotes the set of \emph{positive} natural numbers. For any real numbers
$a\leq b$, we set $[a,b]:=\{x\in \n \ : \ a\leq x \leq b\}$.  We make a similar convention for the intervals $(a,b]$, $[a,b)$, and $(a,b)$.
\end{convention}

We recall some well-known densities on $\n$.

\begin{definition}
Suppose that $A\subseteq \n$ and $0<r\leq 1$.
\begin{itemize}
\item The \emph{upper $r$-density of $A$} is defined to be
$$\ud_r(A):=\limsup_{n\rightarrow\infty}\frac{r}{n^{r}}\sum_{x\in A\cap
[1,n]}\frac{1}{x^{1-r}}.$$
\item The \emph{lower $r$-density of $A$} is defined to be
$$\ld_r(A):=\liminf_{n\rightarrow\infty}\frac{r}{n^{r}}\sum_{x\in A\cap [1,n]}\frac{1}{x^{1-r}}.$$
\end{itemize}
\end{definition}

Note that $\ud_1(A)$ and $\ld_1(A)$ are simply the usual upper and lower asymptotic densities
of $A$, respectively. For that reason, we omit the subscript $r$ when $r=1$.



\begin{definition}
Suppose that $A\subseteq \n$.  Then:
\begin{itemize}
\item The \emph{upper logarithmic density of $A$} is defined to be
$$\uld(A):=\limsup_{n\rightarrow\infty}\frac{1}{\ln n}\sum_{x\in A\cap [1,n]}\frac{1}{x}.$$
\item The \emph{lower logarithmic density of $A$} is defined to be
$$\lld(A):=\liminf_{n\rightarrow\infty}\frac{1}{\ln n}\sum_{x\in A\cap [1,n]}\frac{1}{x}.$$
\end{itemize}
\end{definition}

%

The following result establishing relationships amongst the above densities was proven in \cite{raja}.

\begin{fact}\label{compare}
For $A\subseteq \n$ and $0<r<s\leq 1$, we have $$\ld_{s}(A)\leq\ld_{r}(A)\leq\lld(A)\leq
\uld(A)\leq\ud_{r}(A)\leq\ud_{s}(A).$$
\end{fact}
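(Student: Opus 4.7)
The plan is to reduce all inequalities to partial summation (Abel's identity), using that the five densities differ only in the weight attached to $\mathbf{1}_A$: $r$-density weights by $x^{r-1}$ with normalization $r/n^r$, while logarithmic density weights by $x^{-1}$ with normalization $1/\ln n$. The middle inequality $\lld(A)\leq\uld(A)$ is immediate. The remaining four split into two pairs, and the lower pair follows from the upper pair by duality: since $f_n(\n)\to 1$ for each of the relevant functionals, one has $\ld_r(A)=1-\ud_r(\n\setminus A)$ and $\lld(A)=1-\uld(\n\setminus A)$, so the lower chain follows from the upper chain applied to the complement.

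For $\ud_r(A)\leq\ud_s(A)$ with $r<s$, I would factor $x^{r-1}=x^{s-1}\cdot x^{r-s}$, set $T(y):=\sum_{x\in A\cap[1,y]} x^{s-1}$, and apply Abel summation:
\begin{equation*}
\sum_{x\in A\cap[1,n]} x^{r-1} \;=\; T(n)\,n^{r-s} \,+\, \sum_{y=1}^{n-1} T(y)\bigl(y^{r-s}-(y+1)^{r-s}\bigr).
\end{equation*}
Given $\epsilon>0$, inserting the bound $T(y)\leq(\ud_s(A)+\epsilon)\,y^s/s$ (valid for $y$ sufficiently large) together with the elementary estimate $y^{r-s}-(y+1)^{r-s}\sim(s-r)\,y^{r-s-1}$ produces a dominant contribution of the form $(\ud_s(A)+\epsilon)\,n^r/r$; multiplying by $r/n^r$ yields $\ud_r(A)\leq\ud_s(A)+\epsilon$, from which the desired inequality follows.

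The inequality $\uld(A)\leq\ud_r(A)$ is handled analogously: factor $x^{-1}=x^{r-1}\cdot x^{-r}$, let $S(y):=\sum_{x\in A\cap[1,y]} x^{r-1}$, apply Abel summation, and use $S(y)\leq(\ud_r(A)+\epsilon)\,y^r/r$ together with $y^{-r}-(y+1)^{-r}\sim r\,y^{-r-1}$. The telescoping sum is then asymptotic to $(\ud_r(A)+\epsilon)\sum_{y\leq n} 1/y\sim(\ud_r(A)+\epsilon)\ln n$, while the boundary term $S(n)n^{-r}$ remains bounded; dividing by $\ln n$ and letting $\epsilon\to 0$ gives the claim.

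The main technical point is controlling the errors in the approximations $(y+1)^\alpha-y^\alpha=\alpha y^{\alpha-1}+O(y^{\alpha-2})$ uniformly and absorbing the contribution of partial sums for $y$ below the ``sufficiently large'' threshold. Both issues are routine: the error contributions sum to at most $O(n^{\alpha})$ or $O(\ln n \cdot n^{-1})$ in the two cases, and the small-$y$ contributions are $O(1)$, hence negligible after the final normalization by $n^r$ or $\ln n$.
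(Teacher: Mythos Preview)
Your argument is correct. The paper, however, does not supply its own proof of this Fact: it simply cites Rajagopal \cite{raja} and moves on. So there is no paper-proof to compare against for this statement.

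That said, your Abel-summation approach is precisely the classical method of Rajagopal, and it is also the method the paper adapts (in nonstandard guise) to prove the uniform analogue, Theorem~2.12. There the authors write $\frac{1}{n^{s-r}}$ as a telescoping sum $\sum_{i\geq n}\bigl(\frac{1}{i^{s-r}}-\frac{1}{(i+1)^{s-r}}\bigr)$ plus a boundary term, interchange the order of summation, and apply the upper bound on the inner partial sums---exactly your partial-summation maneuver, written out explicitly rather than invoked by name. Your duality reduction $\ld_r(A)=1-\ud_r(\n\setminus A)$ to handle the lower chain is a clean touch that the paper does not need in the Banach setting (since only upper Banach densities are treated there), but it is standard and valid here. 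The error-control paragraph at the end is accurate: the small-$y$ and Taylor-remainder contributions are $O(1)$ and vanish after normalization.
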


In working with these densities, we often use the following elementary estimates (established using an integral approximation):  for any $a<b$ in $\n$, we have
$$\sum_{x=a}^{b-1}\frac{1}{x^{1-r}}\leq \frac{b^r-a^r}{r}\leq \sum_{x=a+1}^b \frac{1}{x^{1-r}}.$$
\begin{theorem}\label{positiveRdensity}
For $A\subseteq \n$ and $0<r\leq 1$, we have
$$\ud_{r}(A)\geq 1-\left(1-\ud(A)\right)^{r}.$$
\end{theorem}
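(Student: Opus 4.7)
The plan is to pass to the complement. Set $B := \mathbb{N}\setminus A$, so $\ld(B) = 1 - \ud(A)$. The target inequality then reduces to two intermediate facts: (a) $\ud_r(A) = 1 - \ld_r(B)$, and (b) $\ld_r(B) \leq \ld(B)^r$. Combining them yields $\ud_r(A) = 1 - \ld_r(B) \geq 1 - \ld(B)^r = 1 - (1-\ud(A))^r$.

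For (a), let $a_n := \frac{r}{n^r}\sum_{x\in A\cap [1,n]} x^{r-1}$ and $b_n := \frac{r}{n^r}\sum_{x\in B\cap [1,n]} x^{r-1}$. Since $A$ and $B$ partition $\mathbb{N}$, we have $a_n + b_n = \frac{r}{n^r}\sum_{x=1}^n x^{r-1}$, and the elementary integral estimate recalled just before the theorem forces $a_n + b_n \to 1$. Hence $\limsup a_n = 1 - \liminf b_n$, which is (a).

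For (b), I would argue by rearrangement. Enumerate $B\cap[1,n] = \{x_1 < x_2 < \cdots < x_m\}$, where $m := |B\cap[1,n]|$. Since $r \leq 1$, the map $t \mapsto t^{r-1}$ is non-increasing, and $x_i \geq i$, so $x_i^{r-1} \leq i^{r-1}$. Invoking the integral estimate $\sum_{i=1}^m i^{r-1} \leq m^r/r$ (a direct consequence of the bound recalled before the theorem), we obtain
$$\frac{r}{n^r}\sum_{x\in B\cap [1,n]}\frac{1}{x^{1-r}} \;\leq\; \frac{r}{n^r}\sum_{i=1}^m i^{r-1} \;\leq\; \left(\frac{|B\cap [1,n]|}{n}\right)^r.$$
Taking $\liminf_n$ on both sides and using continuity of $y \mapsto y^r$ yields (b).

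I expect the main obstacle to be recognizing rearrangement as the correct tool after reducing to complements: to bound $\sum_{x\in B\cap[1,n]} x^{r-1}$ from above in terms of $|B\cap[1,n]|$ alone, one should heuristically imagine $B$ concentrating its elements at the smallest available integers, and the inequality $x_i \geq i$ is precisely the formalization of this intuition. Once that step is identified, the remainder is routine manipulation with the integral comparisons already in the paper.
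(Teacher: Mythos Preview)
Your proof is correct. Both your argument and the paper's rest on the same rearrangement insight---that $\sum_{x\in S} x^{r-1}$ over an $m$-element subset $S\subseteq[1,n]$ is extremized when $S$ is pushed to one end of the interval---but the packaging differs. The paper works in the nonstandard model: it fixes an infinite $H$ witnessing $\ud(A)$, then bounds $\sum_{x\in{}^{*}\!A\cap[1,H]} x^{r-1}$ from \emph{below} by pushing the $N=|{}^{*}\!A\cap[1,H]|$ elements of ${}^{*}\!A$ to the top interval $(H-N,H]$, reading off $1-(1-N/H)^r\approx 1-(1-\ud(A))^r$ directly. You instead pass to the complement $B$ and push its elements to the \emph{bottom}, obtaining the dual inequality $\ld_r(B)\leq\ld(B)^r$ and then complementing back. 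The two moves are mirror images of each other (pushing $A$ up on $[1,n]$ is the same as pushing $B$ down), so neither approach buys anything the other does not; yours is entirely standard and isolates $\ld_r(B)\leq\ld(B)^r$ as an explicit byproduct, while the paper's stays in the nonstandard idiom used throughout the article.
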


\begin{proof}
Set $\alpha:=\ud(A)$ and take $H\in {}^\ast \n\setminus \n$ such that $\frac{N}{H}\approx \alpha$, where $N:=\left \vert {}^*A\cap [1,H]\right\vert$.  Set $\epsilon:=\frac{N}{H}-\alpha$, so $\epsilon$ is a (possibly negative) infinitesimal.  We now have
\begin{alignat}{2}
\ud_{r}(A)&\geq \st\left(\frac{r}{H^{r}}\sum_{x\in\,^*\!A\cap[1,H]}\frac{1}{x^{1-r}}\right)\notag \\ \notag
               &\geq \st\left(\frac{r}{H^{r}}\sum_{x\in\,(H-N,H]}\frac{1}{x^{1-r}}\right) \\ \notag
                       &\geq \st\left(\frac{r}{H^r}\cdot \frac{H^r-(H-N)^r}{r}\right)\\ \notag
                       &=\st(1-(1-(\alpha+\epsilon))^r)\\ \notag
                       &=1-(1-\alpha)^r.\notag \qedhere
\end{alignat}
\end{proof}

\medskip

\begin{corollary}\label{positiveimpliespositive}
If $\ud(A)>0$, then $\ud_r(A)>0$ for all $0<r\leq 1$.
\end{corollary}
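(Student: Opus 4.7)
The plan is to apply Theorem \ref{positiveRdensity} directly. That theorem supplies the inequality
\[
\ud_r(A) \geq 1 - (1-\ud(A))^r,
\]
which is already of the right form: I only need to verify that the right-hand side is strictly positive whenever $\ud(A)>0$.

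So the argument reduces to a short calculus observation. Setting $\alpha := \ud(A)$, the hypothesis gives $\alpha > 0$, hence $1 - \alpha \in [0,1)$. For any $r \in (0,1]$, the function $t \mapsto t^r$ is strictly increasing on $[0,1]$ with $1^r = 1$, so $(1-\alpha)^r < 1$, i.e.\ $1 - (1-\alpha)^r > 0$. Combining with Theorem \ref{positiveRdensity} yields $\ud_r(A) > 0$, as desired.

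There is no real obstacle here — the corollary is essentially a one-line consequence of the preceding theorem, and the only thing worth making explicit is the monotonicity of $t \mapsto t^r$ on $[0,1]$ for $r>0$. One could equally well invoke Fact \ref{compare} together with Theorem \ref{positiveRdensity} applied at $r=1$, but the direct route above is the cleanest.
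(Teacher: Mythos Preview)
Your proof is correct and is exactly the intended approach: the paper states this corollary without proof, as an immediate consequence of Theorem~\ref{positiveRdensity}, which is precisely what you have spelled out. The only content beyond that theorem is the trivial observation that $(1-\alpha)^r<1$ for $\alpha>0$ and $r\in(0,1]$, and you have justified this cleanly.
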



\begin{remark}
It is easy to construct a set $A\subseteq\n$ with $\ud(A)=1$ and $\uld(A)=0$.
As a consequence of the theorem above, we also have $\ud_r(A)=1$ for any $0<r\leq 1$.
\end{remark}

We now introduce the corresponding uniform versions of the above densities.
\begin{definition}
For $A\subseteq \n$ and $0<r\leq 1$, the \emph{(upper) $r$-Banach density}
of $A$ is defined to be
$$\BD_{r}(A):=\lim_{n\to\infty}\sup_{k\in\n}\frac{r}{n}
\sum_{x\in A\cap [k,(k^{r}+n)^{1/r}]}\frac{1}{x^{1-r}}.$$
\end{definition}

Note that $\BD(A)=\BD_1(A)$ is the usual upper Banach density of $A$. Note also that
$\ud_r(A)\leq\BD_r(A)$ from definition.

\begin{definition}
For $A\subseteq \n$, the \emph{(upper) log Banach density} of $A$ is
$$\lBD(A):=\lim_{n\to \infty}\sup_{k\geq 1}\frac{1}{\ln n}\sum_{x\in A\cap [k,nk]}\frac{1}{x}.$$
\end{definition}

Of course one could also define the lower $r$-Banach density and
the lower log Banach density,
but in this paper we only focus on the upper $r$-Banach density and upper log Banach density.

The following nonstandard formulation of
$r$-Banach density and log Banach density follows immediately from the nonstandard characterization of limit.

\begin{proposition}\label{nsubld}
 Let $A\subseteq \n$, $0<r\leq 1$, and $0\leq\alpha\leq 1$.
 \begin{enumerate}
 \item $\BD_{r}(A)\geq\alpha$ if and only
 if there are $k,N\in \starn$ with $N>\n$ such that $$\st\left(\frac{r}{N}
\sum_{x\in A\cap [k,(k^{r}+N)^{1/r}]}\frac{1}{x^{1-r}}\right)\geq\alpha.$$
 \item $\lBD(A)\geq\alpha$ if and only if there are $k,N\in \starn$ with $N>\n$ such that
 $$\st\left(\frac{1}{\ln N}\sum_{x\in {}^{\ast }\!{A}\cap [k,Nk]}
 \frac{1}{x}\right)\geq\alpha.$$

 \end{enumerate}
\end{proposition}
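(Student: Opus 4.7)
The plan is to apply the standard nonstandard characterization of the limit of a sequence, combined with the transfer principle applied to the definition of supremum. I will give the argument for (1) in detail; (2) is proved in exactly the same way, replacing the relevant function below by $g(n):=\sup_{k\geq 1}\frac{1}{\ln n}\sum_{x\in A\cap [k,nk]}\frac{1}{x}$.

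Define, for each $n\in\n$,
\[ f(n) := \sup_{k\in\n}\frac{r}{n}\sum_{x\in A\cap [k,(k^r+n)^{1/r}]}\frac{1}{x^{1-r}}, \]
so that $\BD_r(A) = \lim_{n\to\infty}f(n)$ by definition. The nonstandard characterization of limits states that for a standard real $L$, $\lim_{n\to\infty} f(n) = L$ if and only if ${}^*f(N)\approx L$ for every infinite $N\in\starn$.

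For the ``$\Leftarrow$'' direction, suppose $k_0,N\in\starn$ with $N$ infinite satisfy $\st\bigl(\frac{r}{N}\sum_{x\in {}^*\!A\cap[k_0,(k_0^r+N)^{1/r}]}\frac{1}{x^{1-r}}\bigr)\geq\alpha$. By transfer applied to the defining inequality of $f$, the internal value ${}^*f(N)$ dominates the inner quantity at $k_0$, so $\st({}^*f(N))\geq\alpha$, and hence $\BD_r(A)=\st({}^*f(N))\geq\alpha$. Conversely, assume $\BD_r(A)\geq\alpha$ and fix any infinite $N\in\starn$. Then ${}^*f(N)\approx\BD_r(A)\geq\alpha$. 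By transfer, ${}^*f(N)$ is the internal supremum over $k\in\starn$ of $\phi(k,N):=\frac{r}{N}\sum_{x\in {}^*\!A\cap[k,(k^r+N)^{1/r}]}\frac{1}{x^{1-r}}$, so by the internal definition of supremum (applied with error tolerance $1/N$) there exists $k\in\starn$ with $\phi(k,N) > {}^*f(N) - \tfrac{1}{N}$. Since $1/N$ is infinitesimal, $\phi(k,N)\approx{}^*f(N)$, and so $\st(\phi(k,N))\geq\alpha$.

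There is no real obstacle here; the argument is essentially a direct translation of the definitions of limit and supremum into their nonstandard counterparts via transfer. The only step worth flagging is the extraction of a concrete witness $k$ in the ``$\Rightarrow$'' direction, which is handled cleanly by choosing the infinitesimal tolerance $1/N$ inside the internal definition of supremum rather than invoking a separate overspill argument.
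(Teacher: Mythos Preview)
Your proof is correct and follows exactly the approach the paper indicates: the paper simply asserts that the proposition ``follows immediately from the nonstandard characterization of limit,'' and your argument is a careful unpacking of precisely that, handling the inner supremum via transfer and an infinitesimal tolerance. There is nothing to add.
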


We now establish the uniform version of Fact \ref{compare} above.  The results in \cite{raja} do not immediately apply in the uniform setting.  Nevertheless, our proof is inspired by the arguments from \cite{raja}, although we argue in the nonstandard model to make the idea more transparent.

\begin{theorem}
For any $A\subseteq\n$ and $0<r<s\leq 1$, we have
$$\lBD(A)\leq \BD_{r}(A)\leq\BD_{s}(A).$$
\end{theorem}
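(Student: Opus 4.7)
The plan is to use the nonstandard characterization of Proposition~\ref{nsubld} together with a ``change of coordinates'' argument that decomposes the interval defining one density into many smaller subintervals of the shape natural for the other density, on each of which the ambient density supplies a uniform local bound.

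For $\lBD(A) \leq \BD_{r}(A)$, set $\beta := \BD_{r}(A)$; it suffices to show that for every $k, N \in \starn$ with $N$ infinite one has $\frac{1}{\ln N} \sum_{x \in {}^{*}\!A \cap [k, Nk]} \frac{1}{x} \lesssim \beta$. A preliminary reduction lets us further assume $k$ is itself infinite: if $k$ is finite, replacing $(k, N)$ by $(kL, N/L)$ for any infinite $L$ with $\ln L / \ln N$ infinitesimal truncates only a negligible portion of both numerator and denominator. Pick an infinite $n$ with $n / k^{r} \approx 0$ (for instance $n := \lfloor k^{r/2} \rfloor$), and set $k_{i} := (k^{r} + in)^{1/r}$, so the intervals $[k_{i}, k_{i+1}]$ partition $[k, Nk]$ (up to infinitesimal boundary effects), with $i$ running up to $P$ satisfying $k^{r} + Pn \approx N^{r} k^{r}$. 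Since $x^{r}/k_{i}^{r} \in [1, 1+n/k_{i}^{r}]$ for $x \in [k_{i},k_{i+1}]$, we have $\frac{1}{x} \approx \frac{1}{k_{i}^{r}} \cdot \frac{1}{x^{1-r}}$, and the (uniform in $k_i$) bound from Proposition~\ref{nsubld}(1) yields $\sum_{x \in {}^{*}\!A \cap [k_{i}, k_{i+1}]} \frac{1}{x^{1-r}} \lesssim \beta n/r$. A Riemann-sum estimate $\sum_{i=0}^{P-1} \frac{1}{k^{r}+in} \approx \frac{1}{n} \ln \frac{k^{r}+Pn}{k^{r}}$ then gives
$$\sum_{x \in {}^{*}\!A \cap [k, Nk]} \frac{1}{x} \;\lesssim\; \frac{\beta n}{r}\sum_{i=0}^{P-1}\frac{1}{k^{r}+in} \;\approx\; \frac{\beta}{r}\ln N^{r} = \beta \ln N,$$
as required.

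For $\BD_{r}(A) \leq \BD_{s}(A)$, set $\beta := \BD_{s}(A)$ and fix $k, N$ with $N$ (and, by the same preliminary trick, $k$) infinite. Choose infinite $M$ with $M/k^{s} \approx 0$, and partition $[k, (k^{r}+N)^{1/r}]$ by $k_{i} := (k^{s}+iM)^{1/s}$, with $i$ running up to $P$ satisfying $k^{s}+PM \approx (k^{r}+N)^{s/r}$. For $x \in [k_{i},k_{i+1}]$, $x^{s} \approx k_{i}^{s}$ yields $x^{r-s} \approx k_{i}^{r-s}$, hence $\frac{1}{x^{1-r}} \approx k_{i}^{r-s} \cdot \frac{1}{x^{1-s}}$, and Proposition~\ref{nsubld}(1) bounds $\sum_{x \in {}^{*}\!A \cap [k_{i}, k_{i+1}]} \frac{1}{x^{1-s}} \lesssim \beta M/s$. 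Since $(r-s)/s + 1 = r/s$, the antiderivative $\frac{s}{r}u^{r/s}$ gives $\sum_{i=0}^{P-1}(k^{s}+iM)^{(r-s)/s} \approx \frac{1}{M} \cdot \frac{s}{r}\bigl[(k^{s}+PM)^{r/s} - k^{r}\bigr]$, and combining produces
$$\sum_{x \in {}^{*}\!A \cap [k, (k^{r}+N)^{1/r}]} \frac{1}{x^{1-r}} \;\lesssim\; \frac{\beta M}{s}\cdot\frac{1}{M}\cdot\frac{s}{r}\bigl[(k^{r}+N)-k^{r}\bigr] = \frac{\beta N}{r},$$
as desired.

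The main technical obstacle is the bookkeeping of infinitesimal errors. Two kinds arise: the local approximations ($x^{r} \approx k_{i}^{r}$ and $x^{r-s} \approx k_{i}^{r-s}$) carry relative error of order $n/k_{i}^{r}$ (resp.\ $M/k_{i}^{s}$), and a version of the $\BD_{r}$ (resp.\ $\BD_{s}$) bound uniform in $k_i$ must be extracted from Proposition~\ref{nsubld} (this follows because the $\sup_k$ appears inside the $\lim_n$ in the definition, so for any infinite $n$ the normalized sum is $\leq \beta + \eta$ for some infinitesimal $\eta$ independent of the base). Both issues force $n$ (resp.\ $M$) to be simultaneously large enough that the characterization applies and small enough relative to $k^{r}$ (resp.\ $k^{s}$) that the error accumulated across the (infinite) index range $[0, P)$ remains infinitesimal, which is why the preliminary reduction to infinite $k$ is indispensable.
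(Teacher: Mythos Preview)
Your approach is correct and genuinely different from the paper's. The paper proves $\BD_r(A)\leq\BD_s(A)$ by Abel summation: writing $\frac{1}{n^{1-r}}=\frac{1}{n^{1-s}}\cdot\frac{1}{n^{s-r}}$ and summing by parts against the telescoping differences $\frac{1}{i^{s-r}}-\frac{1}{(i+1)^{s-r}}$ converts the $r$-weighted sum over $[a,b]$ into a weighted average of $s$-weighted partial sums $\sum_{n=a}^i\chi_A(n)/n^{1-s}$, each of which is bounded by $\alpha\sum_{n=a}^i 1/n^{1-s}$ once $i$ exceeds a negligible initial segment. Your argument instead partitions the long $r$-interval into many short $s$-intervals on which the conversion factor $x^{r-s}$ is essentially constant, applies the $\BD_s$ bound directly on each piece, and reassembles via a Riemann sum. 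The Abel-summation route is slicker and avoids any explicit tracking of infinitesimal errors; yours is more hands-on and makes the change-of-coordinates picture transparent. Both handle the remaining inequality by the formal substitution $r\to 0$ (with $\ln$ replacing the power).

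One small caveat on your bookkeeping for $\BD_r\leq\BD_s$: the Riemann-sum comparison gives $\sum_{i=0}^{P-1}(k^s+iM)^{(r-s)/s}\leq k^{r-s}+\frac{sN}{rM}$, and after multiplying by $\frac{\beta M}{s}$ the extra term contributes $\frac{\beta M k^{r-s}}{s}$ to the numerator, which is negligible against $\frac{\beta N}{r}$ only if $M\ll Nk^{s-r}$. This is an additional constraint on $M$ beyond $M\ll k^s$; since $Nk^{s-r}$ is infinite (both $N$ and $k$ being infinite and $s-r>0$) it is still satisfiable, so the argument goes through, but your final paragraph does not mention it. No analogous issue arises in the $\lBD\leq\BD_r$ case, where the corresponding extra constraint $n\ll k^r\ln N$ is already implied by $n\ll k^r$.
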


\begin{proof}
We first prove that $\BD_r(A)\leq\BD_s(A)$.

Let $0<\alpha<1$ be such that $\BD_s(A)<\alpha$.
It suffices to show that $\beta:=\BD_r(A)\leq\alpha$.
By Proposition \ref{nsubld}, we can find $a,b\in\,^*\n$
such that $b^r-a^r>\n$ and
$$\BD_r(A)=\st\left(\left(\sum_{n=a}^b\frac{\chi_A(n)}{n^{1-r}}\right)\left(
\sum_{n=a}^b\frac{1}{n^{1-r}}\right)^{-1}\right).$$
Here, $\chi_A$ denotes the characteristic function of (the nonstandard extension of) $A$.  Next note that if $c,d\in\,^*\n$ are such that $d^s-c^s>\n$,
then Proposition \ref{nsubld} once again implies that
$$\st\left(\left(\sum_{i=c}^{d}\frac{\chi_A(i)}{i^{1-s}}\right)
\left(\sum_{i=c}^{d}\frac{1}{i^{1-s}}\right)^{-1}\right)\leq\BD_s(A)<\alpha.$$
Choose $m\in [a,b]$ such that $m^r-a^r>\n$ and
$$\left(\sum_{n=a}^m\frac{1}{n^{1-r}}\right)
\left(\sum_{n=a}^b\frac{1}{n^{1-r}}\right)^{-1}\approx 0.$$
(For example, let $m=\lceil ((b^r-a^r)^{1/2}+a^r)^{1/r}\rceil$.)
Since $x\mapsto x^s-x^r$ is an increasing function, we have that $m^s-a^s\geq m^r-a^r$. Hence $\displaystyle
\sum_{n=a}^i\frac{\chi_A(n)}{n^{1-s}}<
\alpha\sum_{n=a}^i\frac{1}{n^{1-s}}$ for any $i>m$.
Now we have
\allowdisplaybreaks{
\begin{align*}
\sum_{n=a}^b\frac{\chi_A(n)}{n^{1-r}} &
=\sum_{n=a}^b\frac{\chi_A(n)}{n^{1-s}}\frac{1}{n^{s-r}}\\
 &=\sum_{n=a}^{b}\frac{\chi_A(n)}{n^{1-s}}\left(\sum_{i=n}^{b}\left(\frac{1}{i^{s-r}}
-\frac{1}{(i+1)^{s-r}}\right)
+\frac{1}{(b+1)^{s-r}}\right)\\
 &=\sum_{n=a}^b\sum_{i=n}^{b}\frac{\chi_A(n)}{n^{1-s}}\left(\frac{1}{i^{s-r}}
-\frac{1}{(i+1)^{s-r}}\right)
+\sum_{n=a}^b\frac{\chi_A(n)}{n^{1-s}}\frac{1}{(b+1)^{s-r}}\\
 &=\sum_{i=a}^b\sum_{n=a}^i\frac{\chi_A(n)}{n^{1-s}}\left(\frac{1}{i^{s-r}}
-\frac{1}{(i+1)^{s-r}}\right)
+\sum_{n=a}^b\frac{\chi_A(n)}{n^{1-s}}\frac{1}{(b+1)^{s-r}}\\
 &<\alpha\sum_{i=a}^b\sum_{n=a}^i\frac{1}{n^{1-s}}
\left(\frac{1}{i^{s-r}}-\frac{1}{(i+1)^{s-r}}\right)
+\alpha\sum_{n=a}^b\frac{1}{n^{1-s}}\frac{1}{(b+1)^{s-r}}\\
 &\quad +(1-\alpha)\sum_{i=a}^m\sum_{n=a}^i\frac{1}{n^{1-s}}
\left(\frac{1}{i^{s-r}}-\frac{1}{(i+1)^{s-r}}\right)\\
 &=\alpha\sum_{n=a}^b\frac{1}{n^{1-s}}\sum_{i=n}^{b}
\left(\frac{1}{i^{s-r}}-\frac{1}{(i+1)^{s-r}}\right)
+\alpha\sum_{n=a}^b\frac{1}{n^{1-s}}\frac{1}{(b+1)^{s-r}}\\
 &\quad +(1-\alpha)\sum_{n=a}^m\frac{1}{n^{1-s}}\sum_{i=n}^{m}
\left(\frac{1}{i^{s-r}}-\frac{1}{(i+1)^{s-r}}\right)\\
 &=\alpha\sum_{n=a}^b\frac{1}{n^{1-s}}\frac{1}{n^{s-r}}\\
 &\quad +(1-\alpha)\sum_{n=a}^m\frac{1}{n^{1-s}}\frac{1}{n^{s-r}}-
(1-\alpha)\sum_{n=a}^m\frac{1}{n^{1-s}}\frac{1}{(m+1)^{s-r}}\\
 &\leq\alpha\sum_{n=a}^b\frac{1}{n^{1-r}}
+(1-\alpha)\sum_{n=a}^m\frac{1}{n^{1-r}}.
\end{align*}
}
We conclude that $\beta\leq\alpha$ since
$\displaystyle\left(\sum_{n=a}^m\frac{1}{n^{1-r}}\right)
\left(\sum_{n=a}^b\frac{1}{n^{1-r}}\right)^{-1}\approx 0$.

In the arguments above, if we let $r=0$ and instead require that $\ln(m)-\ln(a)>\n$, then we get $\lBD(A)\leq\BD_s(A)$.
\end{proof}

It is easy to see that the set $A=\bigcup_{n=1}^{\infty}[n!,2n!]$  has the
property that $\lBD(A)=0$ while $\BD_r(A)=1$ for every $r\in(0,1]$. \ The
following example shows that the $r$-Banach densities can also disagree to
this extent.

\begin{example}
For any $0<r<s\leq 1$ there is a set $A\subseteq\n$ such that $\BD_r(A)=0$ and $\BD_s(A)=1$.
\end{example}

\begin{proof}
Let $(a_n)$ be a sequence of positive integers defined by setting $a_1$ to be any integer larger than $1$ and $a_{n+1}:=a_n^2$. Let
\[A=\bigcup_{n=1}^{\infty}\left[a_n^{1/(rs)},\left(a_n^{1/s}+1\right)^{1/r}\right].\]
We show that $\BD_r(A)=0$ and $\BD_s(A)=1$.

Suppose that $k,N\in {}^*\n$ with $N>\n$ are such that $$\BD_r(A)=\st\left(\frac{r}{N}\sum_{x\in {}^*A\cap [k,(k^r+N)^{1/r}]}\frac{1}{x^{1-r}}\right).$$  Let $\nu$ be the maximal $m\in {}^*\n$ such that $$[a_m^{1/rs},(a_m^{1/s}+1)^{1/r}\cap [k,(k^r+N)^{1/r}]\not=\emptyset.$$  Note then that $${}^*A\cap [k,(k^r+N)^{1/r}]\subseteq [k,(\sqrt{a_\nu}^{1/s}+1)^{1/r}]\cup [a_\nu^{1/rs},(a_\nu^{1/s}+1)^{1/r}].$$  The latter interval is negligble:
\begin{eqnarray*}
\lefteqn{\st\left(  \frac{r}{N}\sum_{x\in
\,[a_{\nu}^{1/(rs)},\left(  a_{\nu}^{1/s}+1\right)  ^{1/r}]}\frac{1}{x^{1-r}%
}\right)]}\\& &=\st\left(  \frac{r}{N}\left(  \left(  \frac{\left(  \left(  a_{\nu}%
^{1/s}+1\right)  ^{1/r}\right)  ^{r}}{r}\right)  -\left(  \frac{\left(
\left(  a_{\nu}^{1/s}\right)  ^{1/r}\right)  ^{r}}{r}\right)  \right)
\right)\\  & &=\st\left(  \frac{1}{N}\right)\\  & &=0.
\end{eqnarray*}

Next observe that $(\sqrt{a_\nu}^{1/s}+1)^{1/r}<2(\sqrt{a_\nu})^{1/rs}\leq 2\sqrt{(k^r+N)^{1/r})}$.  If $2\sqrt{(k^r+N)^{1/r})}<k$, then the above computation shows that $\BD_r(A)=0$.  Thus, we may assume that $2\sqrt{(k^r+N)^{1/r})}\geq k$, from which it is readily verified that $N>k^r$.  It follows that

\begin{align*}
\BD_r(A)&\leq \st\left(  \frac{r}{N}\sum_{x\in\,[k,2(\sqrt{k^{r}+N})^{1/r}]}\frac{1}{x^{1-r}%
}\right)\\    & =\st\left(  \frac{r}{N}\left(  \frac{2^{r}(\sqrt{k^{r}+N})}%
{r}-\frac{k^{r}}{r}\right)  \right)  \\
& \leq \st\left(  \frac{r}{N}\left(  \frac{2^{r}(\sqrt{N+N})}{r}\right)
\right)\\  &=0.
\end{align*}


For showing $\BD_s(A)=1$,
it suffices to show that $\left(\left(a_n^{1/s}+1\right)^{1/r}\right)^{s}
-a_n^{1/r}>\n$ when $n>\n$.  Indeed, if $N\in {}^*\n\setminus \n$ is such that $\left(\left(a_n^{1/s}+1\right)^{1/r}\right)^{s}
-a_n^{1/r}>N$, then ${}^*A$ contains the interval $$[a_n^{1/rs},((a_n^{1/rs})^s+N)^{1/s}].$$ Note that
\begin{eqnarray*}
\lefteqn{\left(\left(a_n^{1/s}+1\right)^{1/r}\right)^{s}-a_n^{1/r}}\\
& &=\left(a_n^{1/s}+1\right)\left(a_n^{1/s}+1\right)^{(s-r)/r}-a_n^{1/r}\\
& &\geq\left(a_n^{1/s}+1\right)a_n^{(s-r)/(rs)}-a_n^{1/r}\\
& &=a_n^{\frac{1}{s}+\frac{s-r}{rs}}+a_n^{\frac{s-r}{rs}}-a_n^{\frac{1}{r}}
=a_n^{\frac{s-r}{rs}}.
\end{eqnarray*}
It remains to observe that $a_n^{\frac{s-r}{rs}}>\n$ because $a_n>\n$
and $(s-r)/(rs)$ is a positive standard real number.
\end{proof}

\section{Polynomial structure and multiplicative structure}

In what follows, $\log$ denotes $\log_2$. For $A\subseteq\n$, set 
\[\log A:=\{\lceil\log x\rceil:x\in A\}.\]

We also introduce some convenient notation:  for $k,N\in \starn$ and $E\subseteq \starn$, set $L_{k,N}(E)=\frac{1}{\ln N}\sum_{x\in E\cap [k,Nk]} 1/x$.
\begin{proposition}\label{logA}
If $A\subseteq\n$, we have $\BD(\log A)\geq\lBD(A)$.
\end{proposition}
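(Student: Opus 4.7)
The plan is to invoke both nonstandard characterizations in Proposition~\ref{nsubld}. Fix a standard $\varepsilon>0$; by Proposition~\ref{nsubld}(2), choose $k,N\in\starn$ with $N>\n$ such that $\st L_{k,N}({}^*\!A)\geq \lBD(A)-\varepsilon$. I aim to bound this standard part above by $\BD(\log A)$, from which the conclusion follows by letting $\varepsilon\to 0$.

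The bridge between $\lBD(A)$ and $\BD(\log A)$ is a dyadic decomposition of $[k,Nk]$. Set $K:=\lceil \log k\rceil$ and $L:=\lceil \log(Nk)\rceil$, and for each $j\in[K,L]$ let $B_j:=(2^{j-1},2^j]$. Since $\lceil \log x\rceil=j$ exactly when $x\in B_j$, the $B_j$'s partition $[k,Nk]$, and whenever ${}^*\!A\cap B_j\cap[k,Nk]\neq\emptyset$ one has $j\in {}^*(\log A)\cap[K,L]$. The standard integral estimate gives $\sum_{x\in B_j}\tfrac{1}{x}\leq \ln 2$, which combines to
\[
\sum_{x\in{}^*\!A\cap[k,Nk]}\frac{1}{x}\;=\;\sum_{j=K}^{L}\sum_{x\in{}^*\!A\cap B_j\cap[k,Nk]}\frac{1}{x}\;\leq\;(\ln 2)\,\bigl|\,{}^*(\log A)\cap[K,L]\,\bigr|.
\]

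Dividing by $\ln N$ and using $\ln N=(\log N)\ln 2$ together with $L-K=\log N+O(1)$ (so $L-K$ is infinite and $\st((L-K)\ln 2/\ln N)=1$), I rewrite the inequality as
\[
L_{k,N}({}^*\!A)\;\leq\;\frac{\bigl|\,{}^*(\log A)\cap[K,L]\,\bigr|}{L-K}\cdot\frac{(L-K)\ln 2}{\ln N}.
\]
Proposition~\ref{nsubld}(1) with $r=1$, applied to $\log A$ with the infinite window $[K,K+(L-K)]$, shows that the standard part of the first factor is at most $\BD(\log A)$, while the second factor has standard part $1$. Taking standard parts yields $\lBD(A)-\varepsilon\leq \BD(\log A)$, and since $\varepsilon>0$ was arbitrary this completes the proof.

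I do not anticipate a serious obstacle; once the dyadic decomposition is in place the rest is bookkeeping. The only care required is in tracking the additive $O(1)$ terms coming from the ceilings and from the endpoints of $[k,Nk]$, and in confirming that the conversion factor $(L-K)\ln 2/\ln N$ has standard part $1$, both of which are routine in the nonstandard setup.
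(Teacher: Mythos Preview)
Your proof is correct and follows essentially the same dyadic-decomposition argument as the paper: both bound $\sum_{x\in{}^*\!A\cap[k,Nk]}1/x$ by $(\ln 2)$ times the number of dyadic blocks meeting ${}^*\!A$, then convert the $\ln N$ denominator into the block count $L-K$ (respectively $b-a$). The only cosmetic difference is that the paper first replaces $k$ and $Nk$ by nearby powers of $2$ before decomposing, whereas you keep $k,Nk$ and absorb the endpoint discrepancies into the factor $(L-K)\ln 2/\ln N\approx 1$; also, your use of $\varepsilon$ is unnecessary since Proposition~\ref{nsubld}(2) already furnishes $k,N$ with $\st L_{k,N}({}^*\!A)\geq\lBD(A)$.
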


\begin{proof}

Without loss of generality, we can assume that $\lBD(A)=\alpha>0$.  Take $k,N\in\starn$ with $N>\n$ so that
$\st(L_{k,N}({}^\ast A))=\alpha$. We first claim that we can assume that
$k$ and $kN$ are integer powers of $2$.  Indeed, choose integers $a,b$ such that $2^{a-1}<k\leqslant 2^a$ and
$2^b\leqslant kN<2^{b+1}$. Note that $b-a>\n$.  Observe now that
$$\sum_{x=k}^{2^a-1}1/x,\sum_{x=2^b+1}^{Nk}1/x\leq \ln 2,$$ so 
$$L_{k,N}({}^\ast A)\approx \frac{1}{\ln N}\sum_{x\in {}^\ast A\cap [2^a,2^b]}1/x.$$
It remains now to notice that $\ln(2^{b-1})\leq \ln N\leq \ln (2^{b-a})+\ln 2$, whence   
$$\frac{1}{\ln N}\sum_{x\in {}^\ast A\cap [2^a,2^b]}1/x\approx \frac{1}{\ln 2^{b-a}}\sum_{x\in {}^\ast A\cap [2^a,2^b]}1/x.$$

In light of the previous paragraph, we may take $a<b$ in $\starn$ so that $\st(L_{2^a,2^{b-a}}({}^\ast A))=\alpha$.  For $a\leq i<b$, set $I_i:=[2^i+1,2^{i+1}]$.  Observe that $\lceil \log(x)\rceil=i+1$ for all $x\in I_i$.  Set $\mathcal{I}:=\{i \ : \ I_i\cap {}^\ast A\not=\emptyset\}$.  We then have:

\begin{alignat}{2}
\left\vert \log({}^\ast A)\cap(a,b]\right\vert&=\left\vert \mathcal{I}\right\vert \notag \\ \notag
                                                               &=\sum_{i\in \mathcal{I}} \log(2^{i+1})-\log(2^i)\notag \\
                                                               &\geq \log(e)\sum_{i\in \mathcal{I}}\sum_{x\in {}^\ast A\cap [2^i,2^{i+1})}\frac{1}{x}.\notag                                                            
\end{alignat}

Recalling that $\ln(2^{b-a})=\frac{b-a}{\log(e)}$, it follows that
$$\BD(\log(A))\ggs \frac{\left\vert \log({}^\ast A)\cap(a,b]\right\vert}{b-a}\ggs \frac{1}{\ln 2^{b-a}}\sum_{x\in {}^\ast A\cap [2^a,2^b-1)}\frac{1}{x}=\alpha.$$
\end{proof}

We now come to the central notion of this paper.

\begin{definition}
Fix $c,r\in \r^{>0}$.  
\begin{enumerate}
\item For $a,x\in \r$, we say that $a$ is a \emph{$(c,r)$-approximation of $x$} if $a\in [x,x+cx^r)$.
\item For $A,X\subseteq \r$, we say that $A$ is a \emph{$(c,r)$-approximate subset of $X$} if every $a\in A$ is an $(c,r)$-approximation
of some $x\in X$.
\end{enumerate}
\end{definition}

\begin{theorem}\label{geometricsequence}
Suppose that $A\subseteq \n$ is such that $\lBD(A)>0$.  Then for any $l\in\n$,
there exist arbitrarily large $a,d\in \n$ such that the geometric sequence $G:=\{2^a(2^d)^n:n=0,1,\ldots,l-1\}$ is a $(1,1)$-approximate subset of $A$.
\end{theorem}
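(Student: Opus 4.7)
The plan is to reduce the theorem to a purely additive statement about $\log A$ and then combine Proposition~\ref{logA} with Szemer\'edi's theorem. First I would record the following equivalence: for $a,d,l\in\n$, the geometric sequence $G=\{2^{a+dn}:0\leq n<l\}$ is a $(1,1)$-approximate subset of $A$ if and only if $\{a+dn:0\leq n<l\}\subseteq\log A$. Indeed, unwinding the definition, $2^{a+dn}$ being a $(1,1)$-approximation of some $x\in A$ means $x\leq 2^{a+dn}<2x$, i.e.\ $x\in(2^{a+dn-1},2^{a+dn}]$; and this last condition is in turn equivalent to $\lceil\log x\rceil=a+dn$. So up to this translation, the theorem asserts precisely that $\log A$ contains arithmetic progressions of length $l$ with arbitrarily large starting point and common difference.

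The rest then follows quickly. By Proposition~\ref{logA}, $\BD(\log A)\geq\lBD(A)>0$, so $\log A$ has positive upper Banach density. Szemer\'edi's theorem therefore guarantees arithmetic progressions of every length inside $\log A$. To force both $a$ and $d$ to exceed a prescribed $M\in\n$, I would apply Szemer\'edi's theorem to the tail $(\log A)\cap[M,\infty)$---which has the same upper Banach density as $\log A$, since removing a finite initial segment is immaterial---and extract an arithmetic progression of length $(l-1)M+1$, say $\{b+jd_0:0\leq j\leq(l-1)M\}$ with $b\geq M$ and $d_0\in\n$. Taking every $M$-th term yields an arithmetic progression of length $l$ with starting point $a:=b\geq M$ and common difference $d:=Md_0\geq M$, producing the desired $a,d$.

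In effect, all of the heavy lifting has been absorbed into Proposition~\ref{logA} (the density inequality $\BD(\log A)\geq\lBD(A)$) and Szemer\'edi's theorem. I do not expect any real obstacle: what requires care is only the bookkeeping with the ceiling function when verifying the equivalence in the first paragraph, and the short passage to a sub-progression that is needed to make the common difference large alongside the starting point.
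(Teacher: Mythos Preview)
Your proof is correct and follows the same route as the paper: invoke Proposition~\ref{logA} to get $\BD(\log A)>0$, apply Szemer\'edi's theorem to find long arithmetic progressions in $\log A$, and translate back via the observation that $a+dn\in\log A$ is equivalent to $2^{a+dn}$ being a $(1,1)$-approximation of some element of $A$. The paper is simply terser---it asserts without further justification that one can take $a,d>m$, whereas you spell out the sub-progression trick to force the common difference large---but there is no substantive difference.
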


\begin{proof}

Fix $m\in \n$.  Since $\BD(\log A)\geq \lBD(A)>0$, the set $\log A$ contains an arithmetic progression
$\{a+nd:n=0,1,\ldots,l-1\}$ with $a,d>m$.
Fix $n\in \{0,1,\ldots,l-1\}$ and take $x\in A$ and $\theta\in [0,1)$ such that $a+nd=\log x+\theta$.  Then $x\leq 2^{a+nd}=2^{\theta}x< 2x$.
\end{proof}

The following example shows that we cannot improve upon the level of approximation in the previous theorem.

\begin{example}
For each $\epsilon>0$, there is $A\subseteq \n$ such that
$\lld(A)=\uld(A)>0$ and no positive integer power of $2$ is a
$(1-\epsilon,1)$-approximation of any element of $A$.
\end{example}

\begin{proof}

Choose $\delta>0$ such that $(2-\epsilon)2^{\delta}<2$.
Set \[A:=\bigcup_{n=1}^{\infty}[2^n+1,2^{n+\delta}].\]
Note that the interval $[2^{n+\delta},(2-\epsilon)2^{n+\delta}]$ does not contain
any positive integer power of $2$ as $2^{n+1}\leqslant (2-\epsilon)2^{n+\delta}$
implies that $2\leq (2-\epsilon)2^{\delta}$.  It follows that no power of $2$ is a $(1-\epsilon,1)$ approximation of any element of $A$.  We leave it to the reader to show that $\lld(A)=\uld(A)\geq\delta$.
\end{proof}

Our next example shows that one cannot prove Theorem \ref{geometricsequence} under the weaker assumption of positive Banach density.
\begin{example}
Let $\alpha<1$. Fix a $j$ such that $(j-1)/j>\alpha$. Let $u_0=2$,
$u_{i+1}>(ju_i)^3$, and set
\[A=\bigcup_{i=1}^{\infty}[u_i,ju_i].\]
Then $\ud(A)>\alpha$. For any $n\in\n$, there exists an $m\in\n$ such that
there does not exist $3$-term geometric progression $G=\{a,ar,ar^2\}$ with $a,r>m$
 and $G$ is an $(n-1,1)$-approximate subset of $A$.
\end{example}

For a proof of the claim in the previous example, one can consult our paper \cite{DGJLLM}.

\medskip

\noindent For $A\subseteq\n$ and $0<r\leq 1$, set
\[A^r:=\{\lceil x^r\rceil:x\in A\}.\]

One proves the following proposition in a manner similar to the proof of Proposition \ref{logA}
\begin{proposition}\label{Rreduction}
For any $A\subseteq\n$, we have $\BD(A^r)\geq\BD_r(A)$.
\end{proposition}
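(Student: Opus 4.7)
My plan is to follow the template of Proposition \ref{logA}, replacing the logarithmic change of variables there with the power map $x \mapsto \lceil x^r \rceil$. The case $r = 1$ is trivial since $A^1 = A$ and $\BD_1 = \BD$, so I assume $0 < r < 1$. Writing $\alpha := \BD_r(A)$ and supposing $\alpha > 0$ (else there is nothing to prove), Proposition \ref{nsubld}(1) supplies $k, N \in \starn$ with $N > \n$ and, setting $M := (k^r + N)^{1/r}$,
\[\st\left(\frac{r}{N}\sum_{x \in {}^{*}\!A \cap [k, M]}\frac{1}{x^{1-r}}\right) = \alpha.\]

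Next I would decompose $[k, M] \cap \starn$ according to the value of $\lceil x^r \rceil$. For $i \in \starn$, the fiber $I_i := ((i-1)^{1/r}, i^{1/r}]$ consists of exactly those hyperintegers $x$ with $\lceil x^r \rceil = i$, so writing
\[\mathcal{I} := \{i \in [\lceil k^r \rceil, \lceil M^r \rceil] : {}^{*}\!A \cap I_i \cap [k, M] \neq \emptyset\},\]
we have $\mathcal{I} \subseteq {}^{*}(A^r)$, whence $|\mathcal{I}| \leq |{}^{*}(A^r) \cap [\lceil k^r\rceil, \lceil M^r \rceil]|$.

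The crux is a per-block upper bound: using the integral estimate recorded just before Theorem \ref{positiveRdensity} together with the inequality $(1+y)^r \leq 1 + r y$ (valid for $0 < r \leq 1$ and $y \geq 0$), I would show
\[\sum_{x \in I_i \cap \starn}\frac{1}{x^{1-r}} \leq \frac{1}{r} + \frac{1}{i^{(1-r)/r}}.\]
Summing over $i \in \mathcal{I}$ and enlarging the error sum to run over all of $[\lceil k^r \rceil, \lceil M^r\rceil]$ yields
\[\sum_{x \in {}^{*}\!A \cap [k, M]}\frac{1}{x^{1-r}} \leq \frac{|\mathcal{I}|}{r} + \sum_{i=\lceil k^r\rceil}^{\lceil M^r\rceil}\frac{1}{i^{(1-r)/r}}.\]

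The main obstacle is showing the second sum on the right is negligible relative to $N$. Because $r < 1$, the exponent $\beta := (1-r)/r$ is strictly positive; applying the integral estimate in the three cases $\beta < 1$, $\beta = 1$, and $\beta > 1$ gives $\sum_{i = p}^{p+N} i^{-\beta} = O(N^{\max(1-\beta, 0)} + \ln N) = o(N)$ as $N \to \infty$, uniformly in $p \geq 1$. Hence dividing the previous display by $N/r$ and taking standard parts produces $\alpha \leq \st(|\mathcal{I}|/N)$. Since $\lceil M^r\rceil - \lceil k^r \rceil + 1 = N + O(1)$ and $N$ is infinite, the nonstandard characterization of upper Banach density finally delivers $\BD(A^r) \geq \st(|\mathcal{I}|/N) \geq \alpha$, completing the sketch.
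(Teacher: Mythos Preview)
Your proposal is correct and follows exactly the template the paper intends: since the paper does not write out a proof but merely says ``One proves the following proposition in a manner similar to the proof of Proposition \ref{logA},'' your adaptation of that argument---replacing dyadic blocks $[2^i+1,2^{i+1}]$ by the fibers $I_i=((i-1)^{1/r},i^{1/r}]$ of $x\mapsto\lceil x^r\rceil$---is precisely what is called for. The one genuine extra wrinkle you correctly identify and handle is that, unlike the logarithmic case where $\sum_{x\in I_i}1/x\le\ln 2$ gives a clean constant per block, here the endpoints $(i-1)^{1/r},i^{1/r}$ are not integers, so the per-block bound picks up the error $i^{-(1-r)/r}$; your verification that these errors sum to $o(N)$ uniformly in the starting point (via the monotonicity of $i\mapsto i^{-\beta}$ and the three cases $\beta\lessgtr 1$) is the right way to close that gap.
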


%
%
%

\begin{theorem}\label{powersequence}
Suppose $A\subseteq\n$ and $m\in \n$ are such that $\BD_{1/m}(A)>0$.  Then for any $\epsilon>0$ and $l\in\n$, there exist arbitrarily large $a,d\in \n$ such that $\{(a+nd)^m:n=0,1,\ldots,l-1\}$ is an $(m+\epsilon,\frac{m-1}{m})$-approximate
subset of $A$.
\end{theorem}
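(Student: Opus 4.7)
The plan is to parallel the proof of Theorem \ref{geometricsequence}, using Proposition \ref{Rreduction} in place of Proposition \ref{logA} and the map $x\mapsto x^{1/m}$ in place of $x\mapsto \log x$.

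First, I would apply Proposition \ref{Rreduction} with $r=1/m$ to conclude that $\BD(A^{1/m})\geq \BD_{1/m}(A)>0$. By Szemer\'{e}di's theorem, argued exactly as in the proof of Theorem \ref{geometricsequence}, the set $A^{1/m}$ therefore contains, for any prescribed threshold $M$, an $l$-term arithmetic progression $\{a+nd:n=0,1,\ldots,l-1\}$ with $a,d>M$. For each such $n$, the definition of $A^{1/m}$ gives some $x_n\in A$ with $\lceil x_n^{1/m}\rceil=a+nd$, which is equivalent to
$$(a+nd-1)^m< x_n\leq (a+nd)^m.$$
The right-hand inequality immediately yields $(a+nd)^m\geq x_n$, the lower half of the $(m+\epsilon,(m-1)/m)$-approximation condition.

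For the upper half, I would estimate
$$(a+nd)^m-x_n \;<\; (a+nd)^m-(a+nd-1)^m \;<\; m(a+nd)^{m-1},$$
the last inequality coming from the Mean Value Theorem applied to $t\mapsto t^m$ on $[a+nd-1,a+nd]$. Since $x_n^{1/m}>a+nd-1$, and hence $x_n^{(m-1)/m}>(a+nd-1)^{m-1}$, the desired bound $(a+nd)^m-x_n<(m+\epsilon)\,x_n^{(m-1)/m}$ follows provided
$$\left(\frac{a+nd}{a+nd-1}\right)^{m-1}\leq 1+\frac{\epsilon}{m},$$
which holds once $a+nd$ is sufficiently large in terms of $m$ and $\epsilon$. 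Since $M$ may be chosen as large as we wish, this can be arranged uniformly in $n\in\{0,\ldots,l-1\}$.

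The only real obstacle is this last elementary estimate, and it also clarifies the presence of the $\epsilon$: the gap $(a+nd)^m-(a+nd-1)^m$ is asymptotic to $m(a+nd)^{m-1}$, so the approximation constant cannot in general be reduced to $m$ exactly without either strengthening the density hypothesis or weakening the exponent $(m-1)/m$.
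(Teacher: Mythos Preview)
Your proposal is correct and follows essentially the same strategy as the paper: apply Proposition \ref{Rreduction}, invoke Szemer\'{e}di's theorem to find a long arithmetic progression in $A^{1/m}$ with $a,d$ as large as needed, and then verify the $(m+\epsilon,\frac{m-1}{m})$-approximation via an elementary estimate. The only cosmetic difference is that the paper expands $(x^{1/m}+\theta)^m$ by the binomial theorem and bounds the lower-order terms directly, whereas you use the Mean Value Theorem together with the ratio $\bigl(\frac{a+nd}{a+nd-1}\bigr)^{m-1}\to 1$; the structure and conclusion are identical.
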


\begin{proof}
Fix $p\in \n$.  Since $\BD(A^{1/m})\geq \BD_m(A)>0$, there are $a,d>p$  such that $\{a+nd:n=0,1,\ldots,l-1\}\subseteq A^{1/m}$.
Choose $a$ sufficiently large so that, for any $z\geq a-1$, we have
\[\epsilon z^{m-1}>{{m}\choose{m-2}}z^{m-2}+
{{m}\choose{m-3}}x^{m-3}+\cdots+mx+1.\]
Fix $n\in \{0,1,\ldots,l-1\}$ and take $x\in A$ and $\theta \in [0,1)$ such that $a+nd=x^{1/m}+\theta$.
Since $x^{1/m}>a-1$, we have
\begin{eqnarray*}
\lefteqn{x\leqslant (a+nd)^m=(x^{\frac{1}{m}}+\theta)^m
=x+(m+\epsilon)x^{\frac{m-1}{m}}\theta}\\
& &\quad +\theta\left(-\epsilon x^{\frac{m-1}{m}}+{{m}\choose{m-2}}x^{\frac{m-2}{m}}\theta+
{{m}\choose{m-3}}x^{\frac{m-3}{m}}\theta^2
+\cdots+\theta^{m-1}\right)\\
& &<x+(m+\epsilon)x^{\frac{m-1}{m}}.
\end{eqnarray*}
Hence $(a+nd)^m$ is an $\left(m+\epsilon,\frac{m-1}{m}\right)$-approximation of $x\in A$.
\end{proof}

The next example shows that there is not much room left to improve upon the level of approximation in the previous theorem.

\begin{example}
For any $\epsilon>0$, there exists a $\delta>0$ and there exists
a set $A\subseteq\n$ such that
$\ld_{1/m}(A)=\ud_{1/m}(A)=\delta$ and such that, for any $a\in \n$, $a^m$ is not an
$\left(m-\epsilon,\frac{m-1}{m}\right)$-approximation of any element in $A$.
\end{example}

\begin{proof}

Fix $0<\delta<\epsilon/m$ and set
$\displaystyle A:=\bigcup_{n=1}^{\infty}[n^m+1,(n+\delta)^m)$.
Suppose, towards a contradiction, that $a^m\in [x,x+(m-\epsilon)x^{(m-1)/m})$ for some
$x\in [n^m+1,(n+\delta)^m)$. It follows that $n+1\leq a$, whence
$$((n+\delta)+(1-\delta))^m=(n+1)^m\leqslant(n+\delta)^m+(m-\epsilon)(n+\delta)^{m-1}.$$
Hence $m(n+\delta)^{m-1}(1-\delta)\leqslant(m-\epsilon)(n+\delta)^{m-1}$, which implies
that $\delta m\geq\epsilon$, a contradiction.  We leave it to the reader to check that $\ld_{1/m}(A)=\ud_{1/m}(A)=\delta$.
\end{proof}

\noindent Corollary \ref{positiveimpliespositive} and Theorem \ref{powersequence} immediately imply:

\begin{corollary}
Suppose that $A\subseteq\n$ is such that $\ud(A)>0$.  Then for any $l,m\in\n$ and $\epsilon>0$,
there exists arbitrarily large $a,d\in\n$ such that $\{(a+nd)^m:n=0,1,\ldots,l\}$ is an
$\left(m+\epsilon,\frac{m-1}{m}\right)$-approximate subset of $A$.
\end{corollary}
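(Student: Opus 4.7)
The plan is to chain together the two named results cited in the lead-in sentence, with essentially no extra work. Starting from the hypothesis $\ud(A)>0$, I would first invoke Corollary \ref{positiveimpliespositive} applied with $r=1/m$ to conclude that $\ud_{1/m}(A)>0$.

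Next, I would use the inequality $\ud_r(A)\leq \BD_r(A)$, which is immediate from the definitions and is recorded in the paragraph introducing the $r$-Banach density. Specialized to $r=1/m$, this gives $\BD_{1/m}(A)\geq \ud_{1/m}(A)>0$, which is precisely the hypothesis of Theorem \ref{powersequence}.

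Finally, I would apply Theorem \ref{powersequence} to $A$, with the given $m$ and $\epsilon$, and with $l+1$ in place of $l$ in order to absorb the off-by-one discrepancy between the two statements (the Corollary asks for a progression $\{(a+nd)^m : n=0,1,\ldots,l\}$ of $l+1$ terms, while Theorem \ref{powersequence} produces one of length $l$). This yields arbitrarily large $a,d\in \n$ such that $\{(a+nd)^m : n=0,1,\ldots,l\}$ is an $(m+\epsilon,\frac{m-1}{m})$-approximate subset of $A$, which is the desired conclusion.

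There is no genuine obstacle: the entire content is the composition $\ud(A)>0 \Rightarrow \ud_{1/m}(A)>0 \Rightarrow \BD_{1/m}(A)>0$ followed by a direct appeal to Theorem \ref{powersequence}. The only point requiring a moment of care is the cosmetic reindexing of the progression length.
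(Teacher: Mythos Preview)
Your proposal is correct and matches the paper's approach exactly: the paper simply states that the corollary follows immediately from Corollary~\ref{positiveimpliespositive} and Theorem~\ref{powersequence}, and your argument spells out precisely that chain (including the intermediate step $\ud_{1/m}(A)\leq \BD_{1/m}(A)$). Your attention to the off-by-one reindexing in the progression length is a nice point of care that the paper leaves implicit.
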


%

We should remark that the conclusions of approximate structure really are necessary.  For example, if $A$ is the set of all square-free numbers, then $\ld(A)>0$ but $A$ does not contain any $3$-term geometric progression
or any $m$-th power of an integer greater than $1$ with $m\geq 2$.

%

\end{document}